\theoremstyle{plain}
\newtheorem{Theorem}{Theorem}[section]
\newtheorem{Lemma}[Theorem]{Lemma}
\newtheorem{Corollary}[Theorem]{Corollary}
\newtheorem{Proposition}[Theorem]{Proposition}
\theoremstyle{definition}
\newtheorem{Definition}[Theorem]{Definition}
\theoremstyle{remark}
\newtheorem{Remark}[Theorem]{Remark}
\newcommand{\field}[1]{\mathbb{#1}}     
\newcommand{\Fq}{\field{F}_{\!q}}       
\newcommand{\Pj}{\field{P}}             
\newcommand{\R}{\field{R}}              
\newcommand{\Z}{\field{Z}}              
\newcommand{\fdg}{\;|\;}                
\DeclareMathOperator{\spec}{spec}
\begin{document}

\title[Another proof of Grothendieck's theorem]{Another proof of Grothendieck's theorem on the splitting of vector bundles on the projective line}

\author{Claudia Schoemann}
\address{%
University of Goettingen\\
Mathematical Institute\\
Bunsenstr. 3-5\\
D 37073 Goettingen}
\email{claudia.schoemann@mathematik.uni-goettingen.de}

\author{Stefan Wiedmann}
\address{%
University of Goettingen\\
Mathematical Institute\\
Bunsenstr. 3-5\\
D 37073 Goettingen}
\email{stefan.wiedmann@mathematik.uni-goettingen.de}

\thanks{We thank U.~Stuhler for discussions and historical remarks}

\subjclass{14H60}

\keywords{vector bundles, Grothendieck's theorem}

\begin{abstract}
This note contains another proof of Grothendieck`s theorem on the splitting of vector bundles on the projective line over a field $k$. Actually the proof is formulated entirely in the classical terms of a lattice $\Lambda \cong k[T]^d$, discretely embedded into the vector space $V \cong K_\infty^d$, where $K_\infty \cong k((1/T))$ is the completion of the field of rational functions $k(T)$ at the place $\infty$ with the usual valuation.
\end{abstract}

\maketitle

\section{Historical Remarks}
In a recent paper \cite{Gekeler}
mainly devoted to various questions connected with the theory of Drinfeld modules, Ernst-Ulrich Gekeler building on some results of Yuichiro Taguchi \cite{Taguchi}
showed the following theorem concerning the reduction theory of lattices over the polynomial ring $A=\Fq[T]$ over the finite field $\Fq$, which we are going to describe below.

Denoting $K_\infty$ the completion of the rational function field $K=\Fq(T)$ with respect to the valuation $|\;\;|_\infty$ at the place $\infty$ and $C_\infty$ a completion of an algebraic closure of $K_\infty$ with respect to the unique extension of the valuation $|\;\;|_\infty$, one has the following result: (Prop.~3.1 of loc.~cit.) Each $A$-lattice $\Lambda \subseteq C_\infty$ of rank $r   > 0$ has a successive minimum basis (SMB loc.~cit.) $\omega_1, \ldots, \omega_r \in \Lambda$ such that additionally 
$\{\omega_1, \ldots, \omega_r\}$ is an $A$-basis of the lattice $\Lambda$.

This is in rather striking contrast to the situation of classical reduction theory for $\Z$-lattices $\Lambda$ in a Euclidian vector space $V = \R^r$, where at least from $r \geq 4$ on (see \cite{Martinet}, p.~51, for a discussion) an SMB (as a basis of the vector space $V$) usually will not be a basis of the integer lattice under consideration.

The reason for this different situation eventually is (a slightly extended version of) Grothendieck's splitting theorem for vector bundles over the projective line $\field{P}^1$. On the other hand, the corresponding statement on the reduction of lattices for function fields of higher genus $g \geq 1$ would be also completely wrong.

Actually below, making use of Gekeler's proof in \cite{Gekeler}, we give yet another proof of an extended version of Grothendieck's splitting theorem. We have freed the context from the field $C_\infty$ and are in fact working with the polynomial ring $A = k[T]$ for an arbitrary field $k$ and with arbitrary $A$-lattices in a finite dimensional vector space $V$ over $K_\infty$, the completion of $K = k(T)$ with respect to the valuation at $\infty$. In particular, different from \cite{Gekeler}, we will not need the local compactness of $K_\infty$.

As is well known, there are many different proofs by many different authors for the (now called) Grothendieck's splitting theorem and one can find a discussion of the historical background in \cite{OkonekSchneiderSpindler}, p.44, where things are finally traced back to a fundamental paper of Dedekind and Weber \cite{DedekindWeber} in 1882. 

It is interesting that the proof here uses only simple results concerning finite dimensional vector spaces over a discrete valuated complete field. That we have to work in the case of genus $g=0$ is hidden in the fact, that any Laurent series from $k(T)_\infty = k((\frac{1}{T}))$ in $\frac{1}{T}$ can be uniquely written as a polynomial in $k[T]$ and a power series in $\frac{1}{T}$, vanishing at $\infty$, which in fact is equivalent to a cohomological calculation of the coherent cohomology of the structure sheaf $\mathcal{O}_{\field{P}^1}$, using the flat cover $\mathrm{Spec}(k[T]) \cup 
\mathrm{Spec}(k[[\frac{1}{T}]])$ of $\field{P}^1$ over $k$.

\section{Vector bundles on the projective line}

\subsection{Notations}
\begin{itemize}
 \item $k$ field.
 \item $ A = k[T]$ the ring of polynomials  over $k$.
 \item $ K = k(T)$ rational function field over $k$.
 \item $\infty := -\deg$ discrete valuation on $K$.
 \item $\pi := T^{-1}$ uniformizing element of $\infty$.
 \item $|\;| := \sigma^{\infty(\;)}$  associated absolute value on $K$  for some fixed real number $0 < \sigma < 1$.
 \item $ K_\infty$, a completion of $K$ with respect to this absolute value. In concrete terms the field of Laurent series in $\pi$ over $k$.
 \item $ \mathcal{O}_\infty$ valuation ring w.r.t. this absolute value in $K_\infty$. In concrete terms the ring of power series in $\pi$ over $k$.
 \item $ \mathfrak{m}_\infty = \pi \mathcal{O}_\infty$ valuation ideal.
 \item $V$ finite dimensional vector space over $K_\infty$.
 \item $\Lambda$ $A$-lattice in $V$.
 \item $\Lambda_\infty$ $\mathcal{O}_\infty$-lattice in $V$.
 \item $\lambda = \lambda^A + \lambda^\mathfrak{m}$ unique 
 decomposition   for $\lambda \in K_\infty$ s.t. $\lambda^A \in A$, $\lambda^\mathfrak{m}\in \mathfrak{m}_\infty$, the unique decomposition of $\lambda$ in its polynomial part and a part  vanishing at $ \infty$ .
 \item $< \cdots >_R$ space generated  over $R$.
\end{itemize}

\subsection{Construction of vector bundles on the projective line}

A vector bundle $\mathcal{E}$ on $\Pj^1_k$ is a locally free sheaf of finite rank $d$. If we use the faithfully flat covering $(\spec(A), \spec(\mathcal{O}_\infty))$ (c.f. \cite{Diss}, chapter 2.8)
we can describe the bundle as follows:

If we complete the stalk of $\mathcal{E}$ at the point $\infty$ we get a projective, thus free $\mathcal{O}_\infty$-module $\Lambda_\infty$ of finite rank $d$ which is canonically embedded into the $d$-dimensional $K_\infty$-vector space $V :=  K_\infty \otimes_{\mathcal{O}_\infty} \Lambda_\infty$. By construction $\Lambda_\infty$ contains a basis of $V$, so it is an $\mathcal{O}_\infty$-lattice in $V$. On the other hand, $\Lambda:=\mathcal{E}(\spec(A))$ is a projective $A$-module of rank $d$. Thus it is free as $A$ is a PID. By the glueing-data we can embed $\Lambda$ into $V$ and it contains a basis of $V$ by the glueing-condition.

Altogether we can describe a vector bundle of rank $d$ on $\Pj^1_k$ by two modules generated by bases (we call them lattices) of a $d$-dimensional $K_\infty$-vector space $V$: An $A$-lattice $\Lambda$ and an  $\mathcal{O}_\infty$-lattice $\Lambda_\infty$ inside $V$ both of rank $d$. The Theorem of Grothendieck now boils down to the question if there is a common basis for both lattices. More precisely:

\begin{Theorem}\label{Th1}
 There exist a basis $v_1, \ldots, v_d \in V$ of the $\mathcal{O}_\infty$-lattice $\Lambda_\infty$ and integers $n_1, \ldots, n_d \in \Z$ such that $\pi^{n_1}v_1, \ldots, \pi^{n_d} v_d$ is a basis of the $A$-lattice $\Lambda$. If $\mathcal{E}$ is the associated vector bundle then it splits into line bundles 
 $$\mathcal{E} \cong \mathcal{O}(n_1) \oplus \cdots \oplus \mathcal{O}(n_d).$$
\end{Theorem}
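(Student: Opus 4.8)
The plan is to extract from $\Lambda$ a \emph{successive minima basis} with respect to the norm attached to $\Lambda_\infty$, and then to exploit the fact that the base is $\Pj^1$ --- concretely the splitting $K_\infty=A\oplus\mathfrak{m}_\infty$ recorded in the Notations --- to see that such a basis is simultaneously adapted to \emph{both} lattices. First I would turn $\Lambda_\infty$ into a norm: fixing an $\mathcal{O}_\infty$-basis $e_1,\dots,e_d$ of $\Lambda_\infty$, set $\|\sum_i a_ie_i\|:=\max_i|a_i|$. Since two $\mathcal{O}_\infty$-bases of $\Lambda_\infty$ differ by a matrix in $\GL_d(\mathcal{O}_\infty)$, this is well defined; it is ultrametric and $K_\infty$-homogeneous, and $\pi^m\Lambda_\infty=\{v\in V:\|v\|\le\sigma^m\}$ for all $m\in\Z$, in particular $\Lambda_\infty=\{v:\|v\|\le 1\}$. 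Because $\Lambda$ is discretely embedded in $V$, the norm $\|\cdot\|$ attains its minimum on $\Lambda\setminus W$ for every proper $K_\infty$-subspace $W\subsetneq V$.

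Next I would choose $\omega_1,\dots,\omega_d\in\Lambda$ inductively: with $W_0:=\{0\}$, and $\omega_1,\dots,\omega_{j-1}$ already chosen, spanning $W_{j-1}:=\langle\omega_1,\dots,\omega_{j-1}\rangle_{K_\infty}$, let $\omega_j\in\Lambda\setminus W_{j-1}$ have minimal norm $m_j:=\|\omega_j\|$ (possible since $\Lambda$ spans $V$). Then $\omega_1,\dots,\omega_d$ is a $K_\infty$-basis of $V$ with $m_1\le m_2\le\dots\le m_d$; writing $m_i=\sigma^{n_i}$ this reads $n_1\ge n_2\ge\dots\ge n_d$. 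Put $v_i:=\pi^{-n_i}\omega_i$, so $\|v_i\|=1$, whence $v_i\in\Lambda_\infty$ and $\pi^{n_i}v_i=\omega_i$. Everything then reduces to two assertions: (a)~$\omega_1,\dots,\omega_d$ is an $A$-basis of $\Lambda$; (b)~$v_1,\dots,v_d$ is an $\mathcal{O}_\infty$-basis of $\Lambda_\infty$.

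Both (a) and (b) are established by the same device: subtract a correction term that falls back into the relevant lattice, and contradict minimality. For (a): write $\lambda=\sum_i c_i\omega_i\in\Lambda$ with $c_i\in K_\infty$, split $c_i=c_i^A+c_i^{\mathfrak{m}}$, and pass to $\lambda':=\lambda-\sum_i c_i^A\omega_i=\sum_i c_i^{\mathfrak{m}}\omega_i\in\Lambda$. If some $c_i^{\mathfrak m}\ne 0$, let $j$ be the largest such index; then $\lambda'\in\Lambda\setminus W_{j-1}$ gives $\|\lambda'\|\ge m_j$, while $|c_i^{\mathfrak m}|\le\sigma$ and the ultrametric inequality give $\|\lambda'\|\le\sigma\,m_j<m_j$, a contradiction; hence all $c_i\in A$, so $\{\omega_i\}$ generates $\Lambda$ over $A$ and, being $K$-linearly independent, is an $A$-basis. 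For (b): it suffices that the reductions $\bar v_1,\dots,\bar v_d$ be $k$-linearly independent in the $d$-dimensional space $\Lambda_\infty/\pi\Lambda_\infty$, for then the matrix expressing the $v_i$ in the $e_j$ is invertible modulo $\mathfrak{m}_\infty$, hence lies in $\GL_d(\mathcal{O}_\infty)$. A dependence $\sum_{i\le j}c_iv_i\in\pi\Lambda_\infty$ with $c_i\in k$, $c_j\ne 0$, produces $z:=\sum_{i\le j}c_i\,T^{\,n_i-n_j}\omega_i$, which lies in $\Lambda$ \emph{precisely because $n_i\ge n_j$ makes each coefficient $c_i\,T^{\,n_i-n_j}$ a polynomial}; but $z=\pi^{n_j}\sum_{i\le j}c_iv_i$, so $\|z\|\le\sigma^{n_j+1}$, whereas $z\in\Lambda\setminus W_{j-1}$ forces $\|z\|\ge m_j=\sigma^{n_j}$, again a contradiction. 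Granting (a) and (b), the common adapted basis exhibits $\mathcal{E}$ as the direct sum of the rank-one bundles given by the pairs $(A\pi^{n_i}v_i,\mathcal{O}_\infty v_i)$, i.e. the line bundles $\mathcal{O}(n_i)$, as claimed.

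The step I expect to be the real obstacle is (b): that the \emph{normalized} successive minima basis is an honest $\mathcal{O}_\infty$-basis of $\Lambda_\infty$, not merely a family of norm-one vectors. This is exactly where the argument would break down over a base curve of genus $\ge 1$; here it survives thanks to the two genus-$0$ ingredients --- the canonical decomposition $\lambda=\lambda^A+\lambda^{\mathfrak m}$ and the resulting monotonicity $n_1\ge\dots\ge n_d$ of the minima --- which together allow the polynomial correction $z$ above to be formed inside $\Lambda$ and pitted against the minimality of $\omega_j$.
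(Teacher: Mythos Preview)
Your proposal is correct and follows the paper's overall strategy: equip $V$ with the sup-norm attached to an $\mathcal{O}_\infty$-basis of $\Lambda_\infty$, build a successive minimum basis $\omega_1,\dots,\omega_d$ for $\Lambda$, and normalize to $v_i=\pi^{-n_i}\omega_i$. Your step~(a) is essentially the paper's Lemma~3 compressed into a single pass.

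The genuine difference is in step~(b). The paper does not argue directly that the $v_i$ are independent modulo~$\pi$; instead it first proves that the SMB $\omega_1,\dots,\omega_d$ is \emph{orthogonal} (Lemmas~1 and~2, via the distance semi-norm $\|\cdot\|_{U_{i-1}}$ and the fact that finite-dimensional subspaces are strictly closed), and then invokes the general principle that an orthonormal family is automatically an $\mathcal{O}_\infty$-basis of the unit ball (Proposition~\ref{Prop0}, part~2). You bypass this machinery entirely: from a hypothetical dependence $\sum_{i\le j}c_iv_i\in\pi\Lambda_\infty$ you manufacture the lattice element $z=\pi^{n_j}\sum_{i\le j}c_iv_i=\sum_{i\le j}c_iT^{\,n_i-n_j}\omega_i\in\Lambda$ (using $n_i\ge n_j$) and confront its norm with the minimality of $\omega_j$. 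Your route is more elementary and self-contained --- it needs neither the strictly-closed-subspace theory nor the orthogonality lemmas --- whereas the paper's detour through orthogonality buys something extra: the intrinsic characterisation of the $\|\omega_i\|$ as successive minima radii (Corollary~\ref{Cor2}), from which uniqueness of the $n_i$ follows.
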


\subsection{Normed vector spaces over discrete valuated, complete fields}

Let $F$ be a discrete valuated, non-archimedean, complete field with (multiplicative) valuation $|\;|$ and $\mathring{F}$ the valuation ring.

\begin{Definition}\label{Def1}
 An $F$-vector space $V$ is a \emph{normed} vector space if there exists a norm function
 $$ || \; || : V \rightarrow \R^{\geq 0}$$
such that:
 \begin{enumerate}[1.)]
 \item $||v|| = 0 \Leftrightarrow v=0$
 \item $||\lambda v|| = |\lambda|\, ||v||$
 \item $||v+w|| \leq \max(||v||,||w||)$ (strong triangle inequality).
 \end{enumerate}
 If the set $||V\setminus \{0\}||$ is discrete in $\R$, then the norm is called discrete and if we omit condition 1) it is called a semi-norm.
\end{Definition}

\begin{Remark}\label{Rem1}
 From the strong triangle inequality we conclude:
 \begin{enumerate}[1.)]
  \item If $||v|| \neq ||w||$, then $||v-w||=||v+w|| = \max(||v||,||w||)$.
  \item If $||v|| > ||v-w||$, then $||v|| = ||w||$.
  \item A sequence $(z_n)$ in $V$ is a Cauchy-sequence iff $\lim_{n\rightarrow \infty} ||z_{n+1} - z_n || = 0$.
 \end{enumerate}
\end{Remark}

\begin{Definition}\label{Def2}
A normed vectorspace $V$ is called \emph{cartesian} if there exists a basis $\{v_1, \ldots, v_d\}$ such that 
$||v|| = \max_{1\leq i \leq d} |\lambda_i|\, ||v_i||$ for $v = \sum_{i=1}^d \lambda_i v_i$, $\lambda_i \in F$. In this case we will call the basis \emph{orthogonal}. If furthermore $||v_i|| = 1$ for $1 \leq i \leq d$, then it is called orthonormal.
\end{Definition}

\begin{Proposition}\label{Prop0}
\begin{enumerate}[1.)]
 \item If $\{v_1, \ldots, v_d\}$ is a basis of an $F$-vector space $V$, then for $v = \sum_{i=1}^d \lambda_i v_i \in V$ 
 
 $$||v|| := \max_{1\leq i \leq d} |\lambda_i| $$
 defines a norm on $V$ and $||V|| =  |F|$. Furthermore $\{v_1, \ldots, v_d\}$ is orthonormal w.r.t. $||\;||$.
 \item If $\{v_1, \ldots, v_d\}$ is an orthonormal basis of a normed $F$-vector space $V$ then 
 $$ \mathring{V}:=\{ v \in V \fdg ||v|| \leq 1\} = <v_1, \ldots, v_d>_{\mathring{F}}.$$
\end{enumerate}
\end{Proposition}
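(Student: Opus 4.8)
The plan is to handle the two parts separately. Part~1.) is a direct verification of the three norm axioms of Definition~\ref{Def1} for the formula $||v|| = \max_i |\lambda_i|$; part~2.) is essentially unwinding the definition of an orthonormal basis from Definition~\ref{Def2}.

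For part~1.), writing $v = \sum_{i=1}^d \lambda_i v_i$ and $w = \sum_{i=1}^d \mu_i v_i$, axioms 1.) and 2.) are immediate ($||v|| = 0 \Leftrightarrow$ all $\lambda_i = 0 \Leftrightarrow v = 0$, and $||\mu v|| = \max_i |\mu|\,|\lambda_i| = |\mu|\,||v||$). The strong triangle inequality is the only point that uses any structure: since $v+w = \sum_i(\lambda_i+\mu_i)v_i$ and $|\lambda_i+\mu_i| \le \max(|\lambda_i|,|\mu_i|) \le \max(||v||,||w||)$ for each $i$ by non-archimedeanness, taking the maximum over $i$ gives $||v+w|| \le \max(||v||,||w||)$. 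For $||V|| = |F|$, note every value $||v||$ equals some $|\lambda_i| \in |F|$, while $||\lambda v_1|| = |\lambda|$ realizes every element of $|F|$. Finally $||v_i|| = 1$ and hence $||v|| = \max_i|\lambda_i|\,||v_i||$, so the basis is orthonormal by Definition~\ref{Def2}.

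For part~2.), an orthonormal basis satisfies $||v_i|| = 1$ and $||v|| = \max_i|\lambda_i|\,||v_i|| = \max_i|\lambda_i|$ for $v = \sum_i\lambda_i v_i$. Hence $||v|| \le 1$ holds if and only if $|\lambda_i| \le 1$ for all $i$, i.e.\ $\lambda_i \in \mathring{F}$ for all $i$, which is precisely the statement $v \in <v_1,\ldots,v_d>_{\mathring{F}}$.

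I do not expect any genuine obstacle: the statement is purely formal, and the only place a hypothesis on $F$ enters is the strong triangle inequality in part~1.), which forces the non-archimedean estimate $|\lambda_i+\mu_i|\le\max(|\lambda_i|,|\mu_i|)$ in place of the ordinary triangle inequality. Completeness and discreteness of the valuation are not used here and will only become relevant in the later arguments.
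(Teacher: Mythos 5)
Your proof is correct and follows the same route as the paper's, which simply says part~1.) "follows from the definitions" and carries out exactly your computation $||v|| = \max_i|\lambda_i| \le 1 \Leftrightarrow \lambda_i \in \mathring{F}$ for part~2.); you have merely written out the routine verifications (norm axioms, $||V|| = |F|$) that the paper leaves implicit.
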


\begin{proof}
 Assertion 1.) follows from the definitions. 
 
 If $v \in \mathring{V}$ we can find $\lambda_1, \ldots, \lambda_d \in F$ such that $v = \sum_{i=1}^d \lambda_i v_i$. As $||v|| =  \max_{1 \leq i \leq d}|\lambda_i| \leq 1$ we conclude that $\lambda_1, \ldots, \lambda_d \in \mathring{F}$ and $v \in <v_1, \ldots, v_d>_{\mathring{F}}$. The other direction is obvious.
\end{proof}

\begin{Definition}\label{Def3}
 Let $W$ be a vector subspace of $V$ and $v \in V$. We define the distance
 $$||v||_W := \inf_{w \in W} ||v-w||.$$
 This defines a semi-norm on $V$ and obviously $||v||_W \leq ||v||$ as $0 \in W$.
 
 $W$ is called strictly closed  if for every $v$ in $V$ there exists $w \in W$ s.t. $||v||_W = ||v-w||$. 
\end{Definition}

\begin{Proposition}\label{Prop1}
 If $V$ is a finite dimensional normed vector space over $F$, then $V$ is complete, cartesian, $||V||$ is discrete and every subspace is strictly closed. 
 Furthermore $||V|| \subseteq |F|$ iff there exists an orthonormal basis of $V$.
\end{Proposition}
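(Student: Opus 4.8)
The plan is to argue by induction on the dimension $d=\dim_F V$. The case $d=0$ is vacuous, and for $d=1$ one has $V=Fv_1$ with $||\lambda v_1||=|\lambda|\,||v_1||$, so $V$ is complete because $F$ is, $||V||=|F|\cdot||v_1||$ is discrete, the subspaces $0$ and $V$ are strictly closed for trivial reasons, and $\{v_1\}$ is an orthogonal basis. I also note that once $V$ is known to be \emph{cartesian} the remaining global assertions are automatic: fixing an orthogonal basis $u_1,\dots,u_d$ makes $V$ isometric to $F^d$ with the weighted maximum norm, which is complete since $F$ is, and $||V||=\{0\}\cup\bigcup_{i}|F^{\ast}|\,||u_i||$ is a finite union of cosets of the discrete group $|F^{\ast}|$, hence discrete. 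So in the inductive step only two things must be shown: that every subspace of $V$ is strictly closed, and that $V$ itself is cartesian.

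The crux is the following claim: \emph{if $W\subseteq V$ is a complete cartesian subspace then $W$ is strictly closed in $V$}. Fix $v\in V$ and put $r:=||v||_W$. If $v\in W$ take $w=v$; otherwise $W$ is closed in $V$ (being complete) and $v\notin W$, so $r>0$. Suppose, for contradiction, that $||v-w||>r$ for every $w\in W$. Choose $a_1\in W$ and, using that $r$ is the infimum, recursively choose $a_{i+1}\in W$ with $||v-a_{i+1}||<||v-a_i||$. Writing $t_i:=||v-a_i||$ we have $t_1>t_2>\cdots>r$, and the sharp form of the strong triangle inequality in Remark~\ref{Rem1} gives $||a_i-a_{i+1}||=\max(t_i,t_{i+1})=t_i$; since $a_i-a_{i+1}\in W$ this means each $t_i$ lies in $||W||$. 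But $||W||$, being a finite union of cosets of $|F^{\ast}|$ together with $0$, meets the bounded interval $(r,t_1]$ in only finitely many points, contradicting the fact that $t_1>t_2>\cdots$ are infinitely many distinct such points. Hence $||v-w||=r$ for some $w\in W$. I expect this to be the main obstacle: the whole argument rests on the value set of the norm having no accumulation point in $(0,\infty)$, which is exactly where completeness of $F$ — and morally the hypothesis of genus $0$ — is used.

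It remains to assemble the induction. By the induction hypothesis every subspace of dimension $\le d-1$ is complete and cartesian, hence strictly closed by the claim; in particular a hyperplane $W=\langle u_1,\dots,u_{d-1}\rangle_F$, with $u_1,\dots,u_{d-1}$ an orthogonal basis provided by the induction hypothesis, is strictly closed. Choose $v\in V\setminus W$ and $w^{\ast}\in W$ with $||v-w^{\ast}||=||v||_W$, and set $u_d:=v-w^{\ast}$, so that $||u_d||_W=||v||_W=||u_d||$. A short case analysis with the sharp triangle inequality — comparing $||\sum_{i<d}c_iu_i||$ with $|c_d|\,||u_d||$ and distinguishing the cases $<$, $=$, $>$ — then yields $||\sum_{i\le d}c_iu_i||=\max_{i}|c_i|\,||u_i||$, so $u_1,\dots,u_d$ is an orthogonal basis and $V$ is cartesian. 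By the remark in the first paragraph $V$ is therefore complete with $||V||$ discrete, and $V$ is strictly closed in itself trivially, which closes the induction. For the last assertion: if $\{e_i\}$ is orthonormal then $||V||=|F|\subseteq|F|$; conversely, if $||V||\subseteq|F|$, pick an orthogonal basis $\{u_i\}$, write $||u_i||=|\mu_i|$ with $\mu_i\in F^{\ast}$, and check that $\{\mu_i^{-1}u_i\}$ is an orthonormal basis.
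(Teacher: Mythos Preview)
The paper does not actually prove this proposition; its ``proof'' is a bare reference to several results in Bosch--G\"untzer--Remmert. Your argument is correct and is essentially the standard inductive proof one finds there, streamlined by exploiting the discreteness of $|F^{\ast}|$ directly in the strictly-closed step (the infinite strictly decreasing chain $t_1>t_2>\cdots>r>0$ inside a finite union of $|F^{\ast}|$-cosets). The extension of an orthogonal basis from a strictly closed hyperplane via $u_d=v-w^{\ast}$ with $\|u_d\|=\|u_d\|_W$ is exactly the mechanism that reappears in the paper as Lemmas~\ref{Lem1} and~\ref{Lem2}, so your write-up also makes transparent why those later lemmas work.

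One small remark: your aside that the strictly-closed argument is ``morally the hypothesis of genus $0$'' is misplaced. Proposition~\ref{Prop1} is a purely local statement about finite-dimensional normed spaces over any complete discretely valued field; the genus-$0$ input in this paper is the decomposition $\lambda=\lambda^{A}+\lambda^{\mathfrak m}$, which only enters later in Lemmas~\ref{Lem1} and~\ref{Lem3}.
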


\begin{proof}
 C.f. \cite{BGR} Chapter 2.3 Proposition~4, Chapter 2.4 Proposition~1 and Proposition~3, Chapter 2.5 Observation~2.
\end{proof}

\subsection{Lattices}
Let $V$ be a $d$-dimensional normed vector space over $K_\infty$. By Proposition~\ref{Prop1} we know that $||V||$ is discrete.

\begin{Definition}
 An $A$-lattice $\Lambda$ in $V$ is a finitely generated $A$-module of rank $d$ which contains a $K_\infty$-basis of $V$. I.e. we can find $v_1, \ldots, v_d$ s.t. 
$$\Lambda = <v_1, \ldots, v_d>_A \text{ and } <v_1, \ldots, v_d>_{K_\infty} = V$$
\end{Definition}

\begin{Remark}
 It is not true in general (e.g.~if $k$ is not finite) that $\Lambda$ is discrete in the sense, that every ball contains only a finite number of lattice-points.
\end{Remark}

\begin{Proposition}\label{Prop2}
 Let $\Lambda$ be an $A$-lattice in $V$. Then there exists a shortest non-zero vector in $\Lambda$.
\end{Proposition}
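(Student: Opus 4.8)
The plan is to reduce the statement to two facts: that
\[ S \;:=\; \{\, ||\lambda|| \fdg \lambda \in \Lambda \setminus \{0\}\,\} \]
is bounded below by a \emph{positive} constant, and that a discrete subset of $\R^{>0}$ which is bounded below attains its infimum on any non-empty subset; the $\lambda\in\Lambda\setminus\{0\}$ realising $\inf S$ is then a shortest non-zero vector. By the definition of an $A$-lattice we may write $\Lambda = <w_1, \ldots, w_d>_A$ with $\{w_1, \ldots, w_d\}$ a $K_\infty$-basis of $V$, so that every $\lambda\in\Lambda$ has a \emph{unique} expression $\lambda = \sum_{i=1}^d f_i w_i$ with $f_i\in A$. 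The arithmetic input, where the genus-$0$ hypothesis enters, is that a non-zero $f\in A = k[T]$ satisfies $|f| = \sigma^{-\deg f}\geq 1$; hence $\lambda\neq 0$ forces $\max_i|f_i|\geq 1$.

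To bound $||\lambda||$ from below by a multiple of $\max_i|f_i|$, I would invoke that $V$ is cartesian (Proposition~\ref{Prop1}): fix an orthogonal basis $u_1, \ldots, u_d$, so that $||\sum_j\mu_j u_j|| = \max_j|\mu_j|\,||u_j||$, and let $M\in\GL_d(K_\infty)$ be the matrix expressing the $w_i$ in terms of the $u_j$. Writing $\lambda = \sum_i f_i w_i = \sum_j\mu_j u_j$, each $f_i$ is a $K_\infty$-linear combination of the $\mu_j$ with coefficients among the entries of $M^{-1}$, so with $B := \max_{j,k}|(M^{-1})_{jk}|$ and $m := \min_j||u_j|| > 0$ the strong triangle inequality gives
\[ \max_i|f_i| \;\leq\; B\max_j|\mu_j| \;\leq\; \frac{B}{m}\,||\lambda||. \]
Therefore $||\lambda||\geq (m/B)\max_i|f_i|\geq m/B =: c > 0$ for every $\lambda\in\Lambda\setminus\{0\}$, and so $\inf S\geq c > 0$. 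This uses nothing about discreteness of $\Lambda$ inside $V$, which by the Remark above may fail when $k$ is infinite.

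It remains to check that $\inf S$ is attained. Since $V$ is cartesian, $||V\setminus\{0\}||$ is contained in $\bigcup_{j=1}^d||u_j||\cdot|K_\infty|$, a set whose only accumulation point in $[0,\infty)$ is $0$. As $S\subseteq||V\setminus\{0\}||$ is bounded below by $c > 0$, the set $S\cap[c,s_0]$ is finite for any fixed $s_0\in S$, whence $\inf S = \min S$ is attained by some $\lambda_0\in\Lambda\setminus\{0\}$, the asserted shortest non-zero vector.

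I expect the main obstacle to be the middle step: recognising that the given norm dominates a constant multiple of the ``polynomial sup-norm'' $\sum_i f_i w_i\mapsto\max_i|f_i|$ attached to a lattice basis. This is just the cartesian property of Proposition~\ref{Prop1} unwound through a change of basis; once it is available, the proposition follows from the trivial estimate $|f|\geq 1$ on $A\setminus\{0\}$ together with the discreteness of $||V||$. The base-change bookkeeping I would keep terse, being routine non-archimedean linear algebra.
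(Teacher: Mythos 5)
Your proof is correct, but it takes a genuinely different route from the paper's. The paper does not invoke an orthogonal basis at this point: it proves, by induction on $d$, an auxiliary statement about sequences --- if $z_n=\sum_i a_i^{(n)}v_i$ tends to $0$ while not all coefficient sequences tend to $0$, then (after normalising by the largest coefficient and using completeness of $K_\infty$ together with the strong triangle inequality to produce Cauchy quotients $a_i^{(n)}/a_1^{(n)}$) the vectors $v_1,\ldots,v_d$ must be linearly dependent. Applied to an $A$-basis of $\Lambda$, where $|a|\geq 1$ for $0\neq a\in A$ forbids the coefficients from tending to zero, this shows no sequence of non-zero lattice vectors can converge to $0$, and a shortest vector then exists by the discreteness of $||V||$ quoted from Proposition~\ref{Prop1}. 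You instead take the cartesian property of Proposition~\ref{Prop1} as the main input: expanding a lattice basis in an orthogonal basis $u_1,\ldots,u_d$ and estimating the coordinates via $M^{-1}$, you obtain $||\lambda||\geq (m/B)\max_i|f_i|\geq m/B>0$, i.e.\ the given norm dominates the polynomial sup-norm attached to the lattice basis, and you then note that $||V\setminus\{0\}||\subseteq\bigcup_j ||u_j||\,|K_\infty^\times|$ meets any interval $[c,s_0]$ with $c>0$ in a finite set, so the infimum is attained. In effect you outsource to Proposition~\ref{Prop1} (the BGR result) exactly what the paper reproves by hand --- the equivalence of the given norm with a coordinate sup-norm; this makes your argument shorter and has the merit of making the attainment step fully explicit (the paper leaves it to the bare statement that $||V||$ is discrete), whereas the paper's inductive sequence argument is more self-contained, needing only completeness and the strong triangle inequality rather than the existence of an orthogonal basis.
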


\begin{proof}
 We prove a slightly different statement by induction on the dimension $d$. As $|a|\, \geq 1$ for $0 \neq a \in A$ we get our result.
 
 Let $0 \neq z_n = \sum_{i=1}^d a_i^{(n)} v_i$ be a non-trivial sequence in $V$ converging to zero and at least for one of  $1 \leq i \leq d$ the sequence $(a_i^{(n)})$ does not tend to zero. Then $\{v_1, \ldots, v_d\}$ are linearly dependent over $K_\infty$.
 
 For $d=1$ we have $z_n = a_1^{(n)} v_1$ and $v_1$ has to be zero.
 
 For $d > 1$, we choose a subsequence of $(z_n)$ and $c > 0$ s.t. for all $n$ (after renumbering the indices and dropping the other members of the sequence)
 $$|a_1^{(n)}|\, \geq \max_{1 \leq i \leq d}(c,|a_i^{(n)}|\,).$$
 We define a new sequence $\tilde{z}_n := \frac{z_n}{a_1^{(n)}}$. This sequence converges to zero because the numbers $|a_1^{(n)}|\,$ are bounded from below by $c$. 
 
 Let $y_n := \tilde{z}_{n+1} - \tilde{z}_n = \sum_{i=2}^d b_i^{(n)} v_i $ with $b_i^{(n)} := \frac{a_i^{(n+1)}}{a_1^{(n+1)}} - \frac{a_i^{(n)}}{a_1^{(n)}}$.
 Then either for all $2\leq i \leq d$ every sequence $b_i^{(n)}$ converges to zero. In this case the corresponding sequences of $\frac{a_i^{(n)}}{a_1^{(n)}}$ are converging to elements $a_i \in K_\infty$ because one has Cauchy-convergence (this follows from the strong triangle-inequality) and we get a non-trivial relation between the elements $v_1, \ldots, v_d$. Otherwise we are done by induction.
\end{proof}

\subsubsection{Successive minimum bases (SMB)}

This part is now more or less identical to Gekler's nice proof of proposition 3.1 in \cite{Gekeler}.

\begin{Proposition}\label{Prop3}
 If $\Lambda$ is an $A$-lattice in $V$, then there exists an (orthogonal) SMB $\{\omega_1, \ldots, \omega_d\}$ of $\Lambda$ e.g.
 \begin{enumerate}[1.)]
  \item $<\omega_1, \ldots, \omega_d>_A = \Lambda$,
  \item $<\omega_1, \ldots, \omega_d>_{K_\infty} = V$,
  \item $||\omega_1|| \leq \cdots \leq ||\omega_d||$,
  \item $||v|| = \max_{1 \leq i \leq d} |\lambda_i|\, ||\omega_i||$ for $v = \sum_{i=1}^d \lambda_i \omega_i$ and $\lambda_1, \ldots, \lambda_d \in K_\infty$.
 \end{enumerate}
\end{Proposition}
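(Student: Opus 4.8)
The plan is to argue by induction on $d = \dim_{K_\infty} V$. For $d = 1$ there is nothing to do: since $A$ is a PID, $\Lambda = <\omega_1>_A$ for some $\omega_1 \neq 0$, and 1.)--4.) hold trivially (for 4.), $||\lambda_1\omega_1|| = |\lambda_1|\,||\omega_1||$ by the norm axioms). For the inductive step I would first invoke Proposition~\ref{Prop2} to fix a shortest non-zero vector $\omega_1 \in \Lambda$. The single place where the genus-$0$ hypothesis enters is the unique splitting $\lambda = \lambda^A + \lambda^{\mathfrak{m}}$ of a Laurent series into its polynomial part and a part vanishing at $\infty$; with it I would prove the basic identity
$$\Lambda \cap <\omega_1>_{K_\infty} = <\omega_1>_A,$$
since for $c\omega_1 \in \Lambda$ the vector $c^{\mathfrak{m}}\omega_1 = c\omega_1 - c^A\omega_1$ again lies in $\Lambda$ but has norm $|c^{\mathfrak{m}}|\,||\omega_1|| < ||\omega_1||$ unless $c^{\mathfrak{m}} = 0$, so minimality of $\omega_1$ forces $c = c^A \in A$.

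Next I would pass to the quotient $\bar V := V / <\omega_1>_{K_\infty}$, equipped with the quotient semi-norm $||\bar{v}|| := \inf_{c \in K_\infty} ||v - c\omega_1||$ of Definition~\ref{Def3}. By Proposition~\ref{Prop1} the line $<\omega_1>_{K_\infty}$ is strictly closed, so $||\;||$ is a genuine norm on the $(d-1)$-dimensional space $\bar V$ and each infimum is attained. The image $\bar\Lambda$ of $\Lambda$ is a finitely generated, torsion-free $A$-module of rank $d-1$ containing a $K_\infty$-basis of $\bar V$, hence an $A$-lattice in $\bar V$, and by the identity above $\bar\Lambda \cong \Lambda / <\omega_1>_A$. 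The step I expect to be the real obstacle is the following bound, which I would establish before applying the induction hypothesis: for every non-zero $\bar{v} \in \bar\Lambda$ one has $||\bar{v}|| \geq ||\omega_1||$. Indeed, pick a lift $v \in \Lambda$ and $c_0$ with $||v - c_0\omega_1|| = ||\bar{v}||$; then $u := v - c_0^A\omega_1 = (v - c_0\omega_1) + c_0^{\mathfrak{m}}\omega_1$ is a non-zero element of $\Lambda$, so $||u|| \geq ||\omega_1|| > |c_0^{\mathfrak{m}}|\,||\omega_1||$, whence Remark~\ref{Rem1}~2.) forces $||\bar{v}|| = ||v - c_0\omega_1|| = ||u|| \geq ||\omega_1||$.

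Applying the induction hypothesis to $\bar\Lambda$ gives an orthogonal SMB $\bar\omega_2, \ldots, \bar\omega_d$ of $\bar\Lambda$, which I would lift to $\Lambda$ without inflating norms: for $i \geq 2$ choose $\omega_i^0 \in \Lambda$ over $\bar\omega_i$ and $c$ with $||\omega_i^0 - c\omega_1|| = ||\bar\omega_i||$, and set $\omega_i := \omega_i^0 - c^A\omega_1 \in \Lambda$. Since $\omega_i = (\omega_i^0 - c\omega_1) + c^{\mathfrak{m}}\omega_1$ and $|c^{\mathfrak{m}}|\,||\omega_1|| < ||\omega_1|| \leq ||\bar\omega_i||$ by the previous bound, the strong triangle inequality yields $||\omega_i|| \leq ||\bar\omega_i||$; as $||\omega_i|| \geq ||\bar\omega_i||$ trivially, $||\omega_i|| = ||\bar\omega_i||$.

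It then remains to verify 1.)--4.) for $\{\omega_1, \ldots, \omega_d\}$. Property 2.) holds because $\bar\omega_2, \ldots, \bar\omega_d$ is a basis of $\bar V$ while $\omega_1$ spans the kernel of $V \rightarrow \bar V$; property 1.) follows by reducing an arbitrary $\lambda \in \Lambda$ modulo $<\omega_1>_{K_\infty}$, writing its image in the $\bar\omega_i$ with coefficients in $A$, and using $\Lambda \cap <\omega_1>_{K_\infty} = <\omega_1>_A$ to absorb the residual $\omega_1$-term into $A$; property 3.) holds since $||\omega_1|| \leq ||\omega_2||$ ($\omega_1$ is shortest in $\Lambda$) and $||\omega_2|| = ||\bar\omega_2|| \leq \cdots \leq ||\bar\omega_d|| = ||\omega_d||$. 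For 4.), let $v = \sum_{i=1}^d \lambda_i\omega_i$ with image $\bar{v} = \sum_{i \geq 2} \lambda_i\bar\omega_i$; one always has $||v|| \leq \max_i |\lambda_i|\,||\omega_i||$. If this maximum is attained for some $i \geq 2$, then $||v|| \geq ||\bar{v}|| = \max_{i \geq 2} |\lambda_i|\,||\bar\omega_i|| = \max_{i \geq 2} |\lambda_i|\,||\omega_i||$ by orthogonality of the $\bar\omega_i$, giving equality; if instead $|\lambda_1|\,||\omega_1||$ strictly dominates the remaining terms, then $v = \lambda_1\omega_1 + \sum_{i \geq 2} \lambda_i\omega_i$ with $||\sum_{i \geq 2}\lambda_i\omega_i|| < ||\lambda_1\omega_1||$, so Remark~\ref{Rem1}~1.) gives $||v|| = |\lambda_1|\,||\omega_1||$. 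This closes the induction. The genuinely delicate point is the tension between lattice membership (coefficients in $A$) and norm minimality in the lifting/bound steps, which is precisely what the splitting $\lambda = \lambda^A + \lambda^{\mathfrak{m}}$ controls and what would fail for function fields of higher genus.
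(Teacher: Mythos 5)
Your proof is correct, but it is organized differently from the paper's. The paper (following Gekeler) constructs the whole SMB greedily inside $V$ -- choosing $\omega_i$ of minimal norm in $\Lambda \setminus U_{i-1}$ -- and then verifies orthogonality and generation by separate arguments (Lemma~\ref{Lem1}: $||\omega_i|| = ||\omega_i||_{U_{i-1}}$, Lemma~\ref{Lem2}, Lemma~\ref{Lem3}), whereas you induct on $\dim V$: quotient by the line spanned by a shortest vector $\omega_1$, equip $\bar V = V/<\omega_1>_{K_\infty}$ with the quotient norm (which is exactly the distance semi-norm of Definition~\ref{Def3} pushed to the quotient, a genuine norm by strict closedness from Proposition~\ref{Prop1}), apply the induction hypothesis to $\bar\Lambda \cong \Lambda/<\omega_1>_A$, and lift back without increasing norms. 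The engine is the same in both proofs -- the shortest vector from Proposition~\ref{Prop2}, the decomposition $\lambda = \lambda^A + \lambda^{\mathfrak m}$, and the strong triangle inequality of Remark~\ref{Rem1} -- and your key bound $||\bar v|| \geq ||\omega_1||$ for $0 \neq \bar v \in \bar\Lambda$ plays the role of the paper's Lemma~\ref{Lem1}, while your lift $\omega_i := \omega_i^0 - c^A\omega_1$ mirrors the subtraction of polynomial parts in Lemmas~\ref{Lem1} and~\ref{Lem3}. Your route is essentially the classical rank-induction proof of Grothendieck's theorem (maximal-degree line subbundle, quotient bundle, splitting of the extension) rewritten in lattice language; it is modular and makes the genus-zero input visible in exactly two places, at the cost of some bookkeeping (checking that $\bar\Lambda$ is an $A$-lattice of rank $d-1$, which you should justify via additivity of rank in $0 \to <\omega_1>_A \to \Lambda \to \bar\Lambda \to 0$, and that the quotient semi-norm is a norm). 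The paper's direct construction avoids quotients altogether and yields Corollary~\ref{Cor2} (independence of the norms $||\omega_i||$ from all choices) as an immediate by-product, which your argument recovers only after invoking Lemma~\ref{Lem4}.
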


We construct the SMB by induction:

\begin{itemize}
 \item Choose $\omega_1 \in \Lambda$ s.t. $||\omega_1||$ is  minimal among vectors in $\Lambda \setminus \{0\}$. 
 \item If $\omega_1, \ldots, \omega_{i-1}$ are constructed, then choose $\omega_i \in \Lambda \setminus <\omega_1, \ldots, \omega_{i-1}>_{K_\infty}$, s.t. $\omega_i$ is minimal.
\end{itemize}

\begin{Remark}\label{Rem2}
 By Proposition~\ref{Prop2} and the discreteness of $||V||$ we can always find $\omega_1, \ldots, \omega_d$ as above and they are linearly independent over $K_\infty$ and
 $||\omega_1|| \leq \cdots \leq ||\omega_d||$ holds by construction. So we have to show 1.) and 4.).
\end{Remark}

\begin{Lemma}\label{Lem1}
 Let $U_i := <\omega_1, \ldots, \omega_{i}>_{K_\infty}$. Then $||\omega_i|| = ||\omega_i||_{U_{i-1}}$. 
\end{Lemma}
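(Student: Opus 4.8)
The plan is to split the claim into the trivial inequality $||\omega_i||_{U_{i-1}}\le ||\omega_i||$, which holds because $0\in U_{i-1}$, and the reverse inequality, which should follow from the minimality built into the construction of the $\omega_j$. For $i=1$ there is nothing to do since $U_0=\{0\}$, so I assume $i\ge 2$ and argue by contradiction: suppose $||\omega_i||_{U_{i-1}}<||\omega_i||$. By definition of the infimum (or, if one prefers, by strict closedness of the finite dimensional subspace $U_{i-1}$ from Proposition~\ref{Prop1}) there is then a vector $w\in U_{i-1}$ with $||\omega_i-w||<||\omega_i||$.

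The next step is to replace $w$ by a ``lattice version'' of itself. Writing $w=\sum_{j=1}^{i-1}\mu_j\omega_j$ with $\mu_j\in K_\infty$, I would decompose each coefficient into its polynomial part and its part vanishing at $\infty$, $\mu_j=\mu_j^A+\mu_j^{\mathfrak{m}}$, and set
$$\omega_i' := \omega_i-\sum_{j=1}^{i-1}\mu_j^A\,\omega_j.$$
Since $\mu_j^A\in A$ and $\omega_j\in\Lambda$, we have $\omega_i'\in\Lambda$; and $\omega_i'\notin U_{i-1}$, for otherwise $\omega_i\in U_{i-1}$. This substitution, turning a $K_\infty$-combination into an $A$-combination at the cost of an error lying in $\mathfrak{m}_\infty\cdot<\omega_1,\ldots,\omega_{i-1}>_{\mathcal{O}_\infty}$, is exactly the point where the genus-$0$ hypothesis enters.

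Finally I would estimate $||\omega_i'||$. Rewriting $\omega_i'=(\omega_i-w)+\sum_{j=1}^{i-1}\mu_j^{\mathfrak{m}}\omega_j$ and using the strong triangle inequality,
$$||\omega_i'||\le\max\Big(||\omega_i-w||,\ \max_{1\le j\le i-1}|\mu_j^{\mathfrak{m}}|\cdot||\omega_j||\Big).$$
The first entry is $<||\omega_i||$ by the choice of $w$. For the second, each $|\mu_j^{\mathfrak{m}}|\le\sigma<1$ because $\mu_j^{\mathfrak{m}}\in\mathfrak{m}_\infty$, while $||\omega_j||\le||\omega_{i-1}||\le||\omega_i||$ by the ordering noted in Remark~\ref{Rem2}; hence this entry is also $<||\omega_i||$ (and it is $0$ if all $\mu_j^{\mathfrak{m}}$ vanish, which is harmless as $\omega_i\neq 0$). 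Thus $||\omega_i'||<||\omega_i||$ with $\omega_i'\in\Lambda\setminus U_{i-1}$, contradicting the minimality of $||\omega_i||$ among nonzero vectors of $\Lambda$ outside $<\omega_1,\ldots,\omega_{i-1}>_{K_\infty}$.

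The only step that requires any thought is the choice of the replacement vector $\omega_i'$: one has to clear the principal parts of the coordinates of $w$ via the decomposition $\lambda=\lambda^A+\lambda^{\mathfrak{m}}$ in order to get back inside the $A$-lattice $\Lambda$, and then see that the resulting error term is strictly shorter than $\omega_i$ — which is precisely where the two inequalities $|\mu_j^{\mathfrak{m}}|<1$ and $||\omega_j||\le||\omega_i||$ are used together. Everything else is bookkeeping with the strong triangle inequality and the definition of $||\cdot||_{U_{i-1}}$.
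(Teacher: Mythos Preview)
Your proof is correct and follows essentially the same route as the paper's: assume $||\omega_i||_{U_{i-1}}<||\omega_i||$, pick $w\in U_{i-1}$ with $||\omega_i-w||<||\omega_i||$, replace the $K_\infty$-coefficients of $w$ by their polynomial parts via $\mu_j=\mu_j^A+\mu_j^{\mathfrak m}$, and use $|\mu_j^{\mathfrak m}|<1$ together with $||\omega_j||\le||\omega_i||$ to contradict the minimality of $\omega_i$. One small improvement in your version is the remark that strict closedness of $U_{i-1}$ is not actually needed here---any $w$ with $||\omega_i-w||<||\omega_i||$ suffices, and such a $w$ exists directly from the definition of the infimum; the paper invokes strict closedness to realise the infimum exactly, but then never uses that exactness.
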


\begin{proof}
 For $i=1$ there is nothing to prove. For $i>1$ we only have to show that $||\omega_i|| \leq ||\omega_i||_{U_{i-1}}$ (c.f. Definition \ref{Def3}). Suppose $||\omega_i|| > ||\omega_i||_{U_{i-1}}$. As $U_ {i-1}$ is strictly closed we find $\lambda_1, \ldots, \lambda_{i-1} \in K_{\infty}$ s.t. 
 $||\omega_i||_{U_{i-1}} = || \omega_i - \sum_{j=1}^{i-1} \lambda_j \omega_j||$. Then $||\omega_i|| = || \sum_{j=1}^{i-1} \lambda_j \omega_j ||$ by Remark~\ref{Rem1}. 
 From $|| \sum_{j=1}^{i-1} \lambda^\mathfrak{m}_{j} \omega_j || < ||\omega_i||$ we see that
 $$||\omega_i - \sum_{j=1}^{i-1} \lambda^A_{j} \omega_j || \leq \max( ||\omega_i - \sum_{j=1}^{i-1} \lambda_{j}\omega_j ||, ||\sum_{j=1}^{i-1} \lambda^\mathfrak{m}_{j}\omega_j ||) < ||\omega_i||$$ 
 As $\omega_i - \sum_{j=1}^{i-1} \lambda^A_{j} \omega_j  \in \Lambda \setminus U_{i-1}$ we get a contradiction on the choice of $w_i$.
\end{proof}

\begin{Lemma}\label{Lem2}
 $\omega_1, \ldots, \omega_d$ is orthogonal.
\end{Lemma}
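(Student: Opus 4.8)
The plan is to prove, by induction on $i$, the orthogonality relation
$$\Bigl\|\sum_{j=1}^{i}\lambda_j\omega_j\Bigr\| \;=\; \max_{1\le j\le i}|\lambda_j|\,\|\omega_j\|,\qquad \lambda_1,\dots,\lambda_i\in K_\infty,$$
the case $i=d$ being exactly the assertion of the lemma (and property 4.) of Proposition~\ref{Prop3}). The case $i=1$ is immediate from property 2.) of Definition~\ref{Def1}. For the inductive step I write $v=w+\lambda_i\omega_i$ with $w:=\sum_{j=1}^{i-1}\lambda_j\omega_j\in U_{i-1}$; if $\lambda_i=0$ we are done by the induction hypothesis, so assume $\lambda_i\neq 0$ and factor out $|\lambda_i|$, so that $\|v\|=|\lambda_i|\,\|\omega_i+\lambda_i^{-1}w\|$ with $\lambda_i^{-1}w\in U_{i-1}$.

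The key input is Lemma~\ref{Lem1}: since $-\lambda_i^{-1}w\in U_{i-1}$ it gives the lower bound $\|\omega_i+\lambda_i^{-1}w\|\ge \|\omega_i\|_{U_{i-1}}=\|\omega_i\|$, hence $\|v\|\ge |\lambda_i|\,\|\omega_i\|$. On the other hand the strong triangle inequality gives $\|v\|\le\max\bigl(\|w\|,|\lambda_i|\,\|\omega_i\|\bigr)$, and by the induction hypothesis $\|w\|=\max_{1\le j\le i-1}|\lambda_j|\,\|\omega_j\|$. It then remains to compare $\|w\|$ with $|\lambda_i|\,\|\omega_i\|$ in three cases. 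If $\|w\|<|\lambda_i|\,\|\omega_i\|$, then $\|\lambda_i^{-1}w\|<\|\omega_i\|$ and the first part of Remark~\ref{Rem1} yields $\|\omega_i+\lambda_i^{-1}w\|=\|\omega_i\|$, so $\|v\|=|\lambda_i|\,\|\omega_i\|$, which is the desired maximum. If $\|w\|>|\lambda_i|\,\|\omega_i\|$, the same remark gives $\|\omega_i+\lambda_i^{-1}w\|=\|\lambda_i^{-1}w\|$, so $\|v\|=\|w\|$, again the maximum. In the tie case $\|w\|=|\lambda_i|\,\|\omega_i\|$ the upper bound gives $\|v\|\le\|w\|$ while the Lemma~\ref{Lem1} lower bound gives $\|v\|\ge|\lambda_i|\,\|\omega_i\|=\|w\|$, hence $\|v\|=\|w\|=\max_{1\le j\le i}|\lambda_j|\,\|\omega_j\|$.

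I do not expect a genuine obstacle here: the only delicate point is the tie case, and even there the argument reduces entirely to invoking Lemma~\ref{Lem1}. Note that the defining minimality of the $\omega_i$ is not used again in this lemma, since it has already been absorbed into Lemma~\ref{Lem1}; what is left is a short case distinction resting on that lemma and on Remark~\ref{Rem1}.
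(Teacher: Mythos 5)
Your proof is correct and follows essentially the same route as the paper: induction on $i$, Remark~\ref{Rem1} when $\|w\|\neq|\lambda_i|\,\|\omega_i\|$, and Lemma~\ref{Lem1} to settle the tie case. The only cosmetic difference is that you obtain the lower bound $\|v\|\geq|\lambda_i|\,\|\omega_i\|$ by factoring out $\lambda_i$ and bounding $\|\omega_i+\lambda_i^{-1}w\|$ below by $\|\omega_i\|_{U_{i-1}}$, whereas the paper phrases the same estimate through the semi-norm identity $\|\lambda_i\omega_i\|_{U_{i-1}}=\|v\|_{U_{i-1}}\leq\|v\|$.
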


\begin{proof}
 We prove by induction that $\{\omega_1, \ldots, \omega_i\}$ is orthogonal for $U_i$. For $i=1$ this is clear.
 
 For $i > 1$ 
 we have to show that for $v = \sum_{j=1}^i \lambda_j \omega_j \in U_i$ one has $||v|| = \max_{1 \leq j \leq i} |\lambda_j|\, ||\omega_j||$. This is obvious if 
 $|| \sum_{j=1}^{i-1} \lambda_j \omega_j || \neq |\lambda_i|\, ||\omega_i||$ (c.f.~Remark~\ref{Rem1}). So assume 
 $$|| \sum_{j=1}^{i-1} \lambda_j \omega_j || = |\lambda_i|\, ||\omega_i||$$
 then $||v|| \leq ||\lambda_i \omega_i||$ holds. On the other hand we show that $||\lambda_i \omega_i|| \leq ||v||$:
 
 \begin{align*}
  ||\lambda_i\omega_i|| &= |\lambda_i|\, ||\omega_i|| = |\lambda_i|\, ||\omega_i||_{U_{i-1}}\\
  &= ||\lambda_i \omega_i||_{U_{i-1}} = 
 || \sum_{j=1}^{i-1} \lambda_j \omega_j + \lambda_i \omega_i||_{U_{i-1}} \leq ||v||.
 \end{align*}
\end{proof}

\begin{Lemma}\label{Lem3}
For $1\leq i \leq d$ we have $\Lambda \cap U_{i} = <\omega_1, \ldots, \omega_i>_A$. This means $\Lambda = <\omega_1, \ldots, \omega_d>_A$.
\end{Lemma}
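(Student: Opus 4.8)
The plan is to prove the displayed identity $\Lambda \cap U_i = <\omega_1, \ldots, \omega_i>_A$ by induction on $i$; the concluding assertion $\Lambda = <\omega_1, \ldots, \omega_d>_A$ is then the case $i = d$, since $U_d = V$ by construction. The inclusion $<\omega_1, \ldots, \omega_i>_A \subseteq \Lambda \cap U_i$ is immediate because $\omega_1, \ldots, \omega_i$ lie in $\Lambda$ and in $U_i$, so all the work is in the opposite inclusion.

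For $i = 1$: given $v = \lambda\omega_1 \in \Lambda \cap U_1$ with $\lambda \in K_\infty$, I would write $\lambda = \lambda^A + \lambda^\mathfrak{m}$, note that $\lambda^\mathfrak{m}\omega_1 = v - \lambda^A\omega_1 \in \Lambda$, and observe $||\lambda^\mathfrak{m}\omega_1|| = |\lambda^\mathfrak{m}|\,||\omega_1|| < ||\omega_1||$ unless $\lambda^\mathfrak{m} = 0$. Minimality of $||\omega_1||$ on $\Lambda \setminus \{0\}$ then forces $\lambda^\mathfrak{m} = 0$, i.e. $\lambda \in A$.

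For the inductive step, assume $\Lambda \cap U_{i-1} = <\omega_1, \ldots, \omega_{i-1}>_A$ and take $v = \sum_{j=1}^i \lambda_j\omega_j \in \Lambda \cap U_i$. I would set $v' := v - \sum_{j=1}^i \lambda_j^A\omega_j = \sum_{j=1}^i \lambda_j^\mathfrak{m}\omega_j$; since each $\lambda_j^A \in A$, this still lies in $\Lambda \cap U_i$. Using orthogonality of $\omega_1, \ldots, \omega_i$ (Lemma~\ref{Lem2}), the ordering $||\omega_1|| \leq \cdots \leq ||\omega_i||$, and $|\lambda_j^\mathfrak{m}| < 1$ for $\lambda_j^\mathfrak{m} \neq 0$, one gets $||v'|| = \max_{1\leq j\leq i} |\lambda_j^\mathfrak{m}|\,||\omega_j|| < ||\omega_i||$ (with $||v'||=0$ if all $\lambda_j^\mathfrak{m}$ vanish). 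Were $v' \notin U_{i-1}$, then $v' \in \Lambda \setminus U_{i-1}$ with $||v'|| < ||\omega_i||$, contradicting the defining minimality of $\omega_i$ among vectors of $\Lambda \setminus U_{i-1}$; hence $v' \in U_{i-1}$, which in particular gives $\lambda_i^\mathfrak{m} = 0$ by linear independence of the $\omega_j$. Then $v' \in \Lambda \cap U_{i-1} = <\omega_1, \ldots, \omega_{i-1}>_A$, so $\lambda_j^\mathfrak{m} \in A$ for $j < i$; combined with $\lambda_j^\mathfrak{m} \in \mathfrak{m}_\infty$ and $A \cap \mathfrak{m}_\infty = \{0\}$ (a polynomial vanishing at $\infty$ is zero) this yields $\lambda_j^\mathfrak{m} = 0$ for all $j$, so $v = \sum_j \lambda_j^A\omega_j \in <\omega_1, \ldots, \omega_i>_A$.

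I do not expect a genuine obstacle: the only non-formal step is ruling out $v' \notin U_{i-1}$, which is exactly where the minimality built into the SMB construction and the previously established orthogonality (Lemma~\ref{Lem2}) are used, and where the genus-zero input $A \cap \mathfrak{m}_\infty = \{0\}$ enters. It then only remains to dispatch the trivial edge cases ($v' = 0$, and $U_d = V$ for the final statement).
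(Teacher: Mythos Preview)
Your proof is correct and follows essentially the same argument as the paper: the base case is identical, and in the inductive step both you and the paper form $v' = \sum_j \lambda_j^{\mathfrak m}\omega_j \in \Lambda$, use orthogonality to bound $\|v'\| < \|\omega_i\|$, and invoke minimality of $\omega_i$ to force $\lambda_i^{\mathfrak m}=0$. The only cosmetic difference is in how the induction hypothesis is invoked: the paper applies it to $v - \lambda_i\omega_i \in \Lambda \cap U_{i-1}$ and concludes directly that $v \in \langle\omega_1,\ldots,\omega_i\rangle_A$, whereas you apply it to $v'$ itself and then use $A \cap \mathfrak m_\infty = \{0\}$ to deduce each $\lambda_j^{\mathfrak m}=0$ individually; both routes are equivalent here.
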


\begin{proof}
 If $\lambda \omega_1 \in \Lambda$ then $\lambda \omega_1 = \lambda^{A} \omega_1 + \lambda^{\mathfrak{m}} \omega_1$ and $\lambda^{\mathfrak{m}} \omega_1 \in \Lambda$, so $\lambda^\mathfrak{m} =0$ because $\omega_1$ is a shortest vector in $\Lambda$. This proves the case $i=1$.
 
 If $v = \sum_{j=1}^{i} \lambda_j \omega_j \in \Lambda \cap U_i$ then as above
 $$v' = \sum_{j=1}^{i} \lambda^\mathfrak{m}_{j} \omega_j = \sum_{j=1}^{i} (\lambda_{j} - \lambda^A_{j}) \omega_j \in \Lambda.$$
 Then
 $$||v'|| = || \sum_{j=1}^{i} \lambda^\mathfrak{m}_{j} \omega_j|| < ||\omega_i||.$$
 If $\lambda^\mathfrak{m}_{i} \neq 0$, then $v' \in \Lambda \setminus U_{i-1}$ which is a contradiction to the choice of $\omega_i$. So $\lambda_i \in A$ and
 $v - \lambda_i \omega_i \in \Lambda \cap U_{i-1} = <\omega_1, \ldots, \omega_{i-1}>_A$ by induction. So we conclude, that $v \in <\omega_1, \ldots, \omega_{i}>_A$.
\end{proof}

\begin{Corollary}\label{Cor1}
 $\Lambda \cap U_i = \Lambda\; \cap <\omega_1, \ldots, \omega_i>_K = <\omega_1, \ldots, \omega_i>_A$.
\end{Corollary}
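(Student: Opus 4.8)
The plan is to obtain this as a purely formal consequence of Lemma~\ref{Lem3}, with no new input. The key observation is the chain of ring inclusions $A \subseteq K \subseteq K_\infty$, which immediately yields the chain of $\omega$-spans
$$<\omega_1, \ldots, \omega_i>_A \;\subseteq\; <\omega_1, \ldots, \omega_i>_K \;\subseteq\; <\omega_1, \ldots, \omega_i>_{K_\infty} = U_i.$$
Since each $\omega_j$ lies in $\Lambda$, the leftmost module is already contained in $\Lambda$, so intersecting the whole chain with $\Lambda$ gives
$$<\omega_1, \ldots, \omega_i>_A \;\subseteq\; \Lambda \cap <\omega_1, \ldots, \omega_i>_K \;\subseteq\; \Lambda \cap U_i.$$

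The second step is to invoke Lemma~\ref{Lem3}, which identifies the rightmost term $\Lambda \cap U_i$ with $<\omega_1, \ldots, \omega_i>_A$. Thus the two displayed inclusions are squeezed between two copies of the same $A$-module, forcing all three to coincide. In particular, taking $i=d$ recovers the statement $\Lambda = <\omega_1, \ldots, \omega_d>_A$, while for general $i$ the corollary records the (a priori slightly stronger looking) fact that a lattice vector lying in the $K$-span — not merely the $K_\infty$-span — of the first $i$ members of the SMB is already an $A$-linear combination of them, a statement in the form that will be convenient when passing to the vector-bundle language and comparing with the generic fibre.

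I do not expect any obstacle: the only substantive work, namely the equality $\Lambda \cap U_i = <\omega_1, \ldots, \omega_i>_A$, has already been carried out in Lemma~\ref{Lem3} via the descent to the polynomial part $\lambda^A$, and what remains is just the elementary sandwiching described above.
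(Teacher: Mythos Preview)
Your argument is correct and is exactly the intended one: the paper states this as an unproved corollary of Lemma~\ref{Lem3}, and the sandwiching via $A \subseteq K \subseteq K_\infty$ together with $\Lambda \cap U_i = \langle \omega_1,\ldots,\omega_i\rangle_A$ is precisely how it follows.
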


\begin{Corollary}\label{Cor2}
 We have the following characterisation of the norms of the vectors $\omega_1, \ldots, \omega_d$:
 $$||\omega_i|| = \min\{ r \in \R \fdg B_r(0) \cap \Lambda \text{ contains $i$ linearly independent vectors} \}$$ 
 This means that the numbers $||\omega_1|| \leq \cdots \leq ||\omega_d||$ are independent of the choices made in the construction.
\end{Corollary}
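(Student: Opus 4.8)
The plan is to prove that, for each $1 \le i \le d$, the quantity
$$\mu_i := \min\{\, r \in \R \fdg B_r(0) \cap \Lambda \text{ contains $i$ linearly independent vectors}\,\}$$
(with $B_r(0) = \{v \in V \fdg ||v|| \le r\}$ the closed ball) equals $||\omega_i||$. Since the definition of $\mu_i$ makes no reference whatsoever to a particular SMB, once this equality is established the second assertion of the corollary --- that the numbers $||\omega_1|| \le \cdots \le ||\omega_d||$ do not depend on the choices in the construction --- is immediate, because any two runs of the construction produce bases whose $i$-th norm is the same number $\mu_i$.

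For the bound $\mu_i \le ||\omega_i||$ I would simply exhibit a witness. By Remark~\ref{Rem2} the vectors $\omega_1, \dots, \omega_i \in \Lambda$ are linearly independent over $K_\infty$, and by property~3.) of Proposition~\ref{Prop3} we have $||\omega_j|| \le ||\omega_i||$ for all $j \le i$; hence $B_{||\omega_i||}(0) \cap \Lambda$ already contains $i$ linearly independent vectors. Because $||V||$ is discrete in $\R$ (Proposition~\ref{Prop1}), the infimum defining $\mu_i$ is actually attained, so $r = ||\omega_i||$ is admissible and $\mu_i \le ||\omega_i||$.

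For the reverse bound $\mu_i \ge ||\omega_i||$, let $r$ be any radius for which $B_r(0) \cap \Lambda$ contains linearly independent vectors $x_1, \dots, x_i$. Since $U_{i-1} = <\omega_1, \dots, \omega_{i-1}>_{K_\infty}$ has dimension $i-1$, the span of $x_1, \dots, x_i$ cannot be contained in $U_{i-1}$, so some $x_{j_0}$ lies in $\Lambda \setminus U_{i-1}$. By the very definition of $\omega_i$ in the construction as a norm-minimal element of $\Lambda \setminus U_{i-1}$, we obtain $||\omega_i|| \le ||x_{j_0}|| \le r$. As $r$ ranged over all admissible radii, this gives $\mu_i \ge ||\omega_i||$, and combined with the previous paragraph, $\mu_i = ||\omega_i||$.

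I do not expect a genuine obstacle here: the statement is essentially a bookkeeping reformulation of the two defining properties of the SMB (norm-minimality at each step, and linear independence from Remark~\ref{Rem2}), supplemented by a dimension count in $V$ and, in passing, by Lemma~\ref{Lem3} if one wants to phrase the min over genuinely lattice-generated subspaces. The only point meriting a line of care is that the minimum in the definition of $\mu_i$ is attained, which rests on the discreteness of $||V||$ furnished by Proposition~\ref{Prop1}; everything else is routine.
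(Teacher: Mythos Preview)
Your argument is correct and is exactly the natural expansion of what the paper leaves implicit: the paper states Corollary~\ref{Cor2} without proof, relying on the defining minimality of the $\omega_i$ in $\Lambda\setminus U_{i-1}$ together with the dimension count $\dim U_{i-1}=i-1$, which is precisely the reasoning you spell out. Your remark that attainment of the minimum rests on the discreteness of $||V||$ (Proposition~\ref{Prop1}) is the one point the paper does not make explicit, and the mention of Lemma~\ref{Lem3} is indeed superfluous for this corollary.
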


\begin{Lemma}\label{Lem4}
Let $\omega_1, \ldots, \omega_d \in \Lambda$ be an orthogonal basis of $V$ generating $\Lambda$ and $||\omega_1|| \leq \cdots \leq ||\omega_d||$. Then it is an SMB.
\end{Lemma}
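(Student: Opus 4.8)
The plan is to verify directly that the given basis satisfies the defining property of the SMB construction: for each $i$, the vector $\omega_i$ should have minimal norm among all non-zero vectors of $\Lambda$ lying outside $<\omega_1, \ldots, \omega_{i-1}>_{K_\infty}$. Since the $\omega_1, \ldots, \omega_d$ are linearly independent over $K_\infty$, each $\omega_i$ does lie in $\Lambda \setminus <\omega_1, \ldots, \omega_{i-1}>_{K_\infty}$, so the only thing to check is the inequality $||\omega_i|| \leq ||v||$ for every such $v$.

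First I would take an arbitrary $v \in \Lambda \setminus <\omega_1, \ldots, \omega_{i-1}>_{K_\infty}$ and, using the hypothesis that $\omega_1, \ldots, \omega_d$ generate $\Lambda$ over $A$, write $v = \sum_{j=1}^d a_j \omega_j$ with all $a_j \in A$. Because $v$ is not contained in the $K_\infty$-span of $\omega_1, \ldots, \omega_{i-1}$, at least one coefficient $a_j$ with $j \geq i$ is non-zero; fix such an index $j$.

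Then I would apply the orthogonality hypothesis to obtain $||v|| = \max_{1 \leq \ell \leq d} |a_\ell|\,||\omega_\ell|| \geq |a_j|\,||\omega_j||$. The two remaining ingredients are elementary: $|a_j| \geq 1$ because $a_j$ is a non-zero element of $A = k[T]$ and $\infty = -\deg$ (the very fact already used in the proof of Proposition~\ref{Prop2}), and $||\omega_j|| \geq ||\omega_i||$ since $j \geq i$ and the norms are non-decreasing by hypothesis. Chaining these gives $||v|| \geq |a_j|\,||\omega_j|| \geq ||\omega_j|| \geq ||\omega_i||$, which is exactly what was needed; hence $\{\omega_1, \ldots, \omega_d\}$ is an SMB.

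I do not expect a genuine obstacle: this is really a recognition lemma, and its entire arithmetic content sits in the inequality $|a| \geq 1$ for $0 \neq a \in A$, i.e.\ the genus-$0$ situation. If anything requires care it is bookkeeping — selecting a coefficient index $j \geq i$ and correctly threading the inequalities $|a_j| \geq 1$ and $||\omega_j|| \geq ||\omega_i||$ through the orthogonality formula. As a byproduct, combined with Corollary~\ref{Cor2} this also shows that the numbers $||\omega_1|| \leq \cdots \leq ||\omega_d||$ of such a basis are forced to be the successive minima of $\Lambda$, though that observation is not needed for the statement itself.
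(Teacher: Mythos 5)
Your proof is correct and follows essentially the same route as the paper: expand $v \in \Lambda \setminus U_{i-1}$ over $A$ in the given basis, use orthogonality to get $||v|| = \max_j |a_j|\,||\omega_j||$, and conclude via $|a| \geq 1$ for $0 \neq a \in A$ together with the ordering of the norms. You merely spell out the bookkeeping (choosing a non-zero coefficient with index $j \geq i$) that the paper's terser proof leaves implicit, and you correctly use $|a| \geq 1$ where the paper's proof has the harmless slip ``$|a| > 1$''.
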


\begin{proof}
 For $v \in \Lambda$ we have $v = \sum_{i=1}^d a_i \omega_i$ s.t.~$a_i \in A$. Then
 $$||v|| = \max_{1\leq i \leq d} |a_i|\; ||\omega_i||.$$ 
 As $|a| > 1$ for $a \in A \setminus \{0\}$ we get that $||v|| \geq ||\omega_i||$ for $v \in \Lambda \setminus U_{i-1}$ and hence the result.
\end{proof}

\subsection{Proof of Grothendieck's Theorem}

Given $\Lambda$ and $\Lambda_\infty$ in $V$, we attach to an $\mathcal{O}_\infty$-basis of $\Lambda_\infty$ a norm on $V$ s.t. $\Lambda_\infty$ becomes the unit-ball (c.f. Proposition~\ref{Prop0}). As shown above we can find an orthogonal SMB
$\omega_1, \ldots, \omega_d$ for this norm. We find integers $n_1, \ldots, n_d$ s.t. $|\pi^{n_1}| = ||\omega_1||, \ldots , |\pi^{n_d}| = ||\omega_d||$. 
Then $v_1 := \pi^{-n_1}\omega_1, \ldots,  v_d := \pi^{-n_d}\omega_d$ is an orthonormal basis for the norm and is therefore again by Proposition~\ref{Prop0} an $\mathcal{O}_\infty$-basis of $\Lambda_\infty$ and $\pi^{n_1} v_1,\ldots, \pi^{n_d} v_d$ is an $A$-basis of $\Lambda$. 
From Corollary~\ref{Cor2} and Lemma~\ref{Lem4} we conclude that $n_1, \ldots, n_d$ are unique. 

\bibliographystyle{alpha}
\bibliography{master}           

\begin{thebibliography}{BGR84}

\bibitem[BGR84]{BGR}
S.~Bosch, U.~{G\"untzer}, and R.~Remmert.
\newblock {\em Non-{A}rchimedean analysis}, volume 261 of {\em Grundlehren der
  Mathematischen Wissenschaften [Fundamental Principles of Mathematical
  Sciences]}.
\newblock Springer-Verlag, Berlin, 1984.
\newblock A systematic approach to rigid analytic geometry.

\bibitem[DW82]{DedekindWeber}
R.~Dedekind and H.~Weber.
\newblock Theorie der algebraischen {F}unctionen einer {V}er\"anderlichen.
\newblock {\em J. Reine Angew. Math.}, 92:181--290, 1882.

\bibitem[Gek17]{Gekeler}
Ernst-Ulrich Gekeler.
\newblock Towers of $\mathrm{GL}(r)$-type of modular curves.
\newblock {\em J. Reine Angew. Math.}, 2017.

\bibitem[Mar03]{Martinet}
Jacques Martinet.
\newblock {\em Perfect lattices in {E}uclidean spaces}, volume 327 of {\em
  Grundlehren der Mathematischen Wissenschaften [Fundamental Principles of
  Mathematical Sciences]}.
\newblock Springer-Verlag, Berlin, 2003.

\bibitem[OSS80]{OkonekSchneiderSpindler}
Christian Okonek, Michael Schneider, and Heinz Spindler.
\newblock {\em Vector bundles on complex projective spaces}, volume~3 of {\em
  Progress in Mathematics}.
\newblock Birkh\"auser, Boston, Mass., 1980.

\bibitem[Tag93]{Taguchi}
Yuichiro Taguchi.
\newblock Semi-simplicity of the {G}alois representations attached to
  {D}rinfel'd modules over fields of {``}infinite characteristics{''}.
\newblock {\em J. Number Theory}, 44(3):292--314, 1993.

\bibitem[Wie05]{Diss}
Stefan Wiedmann.
\newblock {\em Drinfeld modules and elliptic sheaves}.
\newblock PhD thesis, Mathematical Institute, University of Goettingen, 2005.

\end{thebibliography}
\end{document}